\DeclareMathOperator{\RP}{\mathbb{R}P}
\DeclareMathOperator{\deRham}{H}
\DeclareMathOperator{\Symp}{Symp}
\DeclareMathOperator{\Ham}{Ham}
\DeclareMathOperator{\Flux}{Flux}
\DeclareMathOperator{\id}{id}
\DeclareMathOperator{\C}{\mathbb{C}}
\DeclareMathOperator{\R}{\mathbb{R}}
\DeclareMathOperator{\Z}{\mathbb{Z}}
\theoremstyle{definition}
\newtheorem{Defn}{Definition}[section]
\theoremstyle{plain}
\newtheorem{Lemm}[Defn]{Lemma}
\newtheorem{Theo}[Defn]{Theorem}
\theoremstyle{remark}
\newtheorem{Example}[Defn]{Example}
\begin{document}

	\author[H. Ishida]{Hiroaki Ishida}
	\address{Graduate~School~of~Science, Osaka~City~University, Sugimoto, Sumiyoshi-ku, Osaka~558-8585, Japan}
	\email{hiroaki.ishida86@gmail.com}
	\title{Symplectic real Bott manifolds}
	\date{\today}
	\keywords{toric topology, symplectic topology, real Bott manifold}
	\subjclass[2000]{57R17,57S25}
\begin{abstract}
A real Bott manifold is the total space of an iterated $\RP ^1$-bundles over a point, where each $\RP^1$-bundle is the projectivization of a Whitney sum of two real line bundles. In this paper, we characterize real Bott manifolds which admit a symplectic form. In particular, it turns out that a real Bott manifold admits a symplectic form if and only if it is cohomologically symplectic. In this case, it admits even a K\"{a}hler structure. We also prove that any symplectic cohomology class of a real Bott manifolds can be represented by a symplectic form. Finally, we study the flux of a symplectic real Bott manifold. 
\end{abstract}

\maketitle

\section{Introduction}
A \emph{real Bott tower} (of height $n$) is a sequence of $\RP ^1$-bundles:
\begin{equation*}
	 M_n \to M_{n-1} \to \dots \to M_1 \to M_0 =\{ \text{a point}\} ,
\end{equation*}
where each $\RP^1$-bundle $M_i \to M_{i-1}$ is the projectivization of a Whitney sum of two real line bundles on $M_{i-1}$. Each $M_i$ is called a \emph{real Bott manifold}. Clearly $M_1 = \RP ^1$ and $M_2 = (\RP ^1)^2 \text{ or a Klein bottle}$. If every bundle in the tower is trivial, then $M_n = (\RP ^1)^n$. However, there are many choices of non-trivial bundles at each stage in the tower and it is known that there are many different diffeomorphism classes in real Bott manifolds (\cite{KM09}, \cite{Masuda}).  A real Bott manifold is also an example of a real toric manifold which admits a flat Riemannian metric (\cite{KM09}). 

Although orientable ones occupy a small portion in all real Bott manifolds (\cite{choi09}), the number of orientable ones of dimension $n$ approaches infinity as $n$ approaches infinity. Among those orientable ones, some are \emph{symplectic}, i.e., admit a symplectic form. In this paper we give a complete characterization of symplectic real Bott manifolds (Theorem~\ref{Theo:maintheo}).  In particular, we prove that among real Bott manifolds $M$ the following are equivalent:
\begin{enumerate}
\item[(1)] $M$ is cohomologically symplectic,
\item[(2)] $M$ is symplectic,
\item[(3)] $M$ admits a K\"ahler structure.  
\end{enumerate}
We remark that the implication (3) $\Rightarrow$ (2) $\Rightarrow$ (1) always holds but the reverse implications (1) $\Rightarrow$ (2) and (2) $\Rightarrow$ (3) do not hold in general as is well-known. 
For example, $\C P^2\#\C P^2$ is cohomologically symplectic but not symplectic because it does not admit an almost complex structure and a certain $T^2$-bundle over $T^2$ constructed in \cite{thur76} is symplectic but does not admit a K\"ahler structure.  

This paper is organized as follows.  In Section~\ref{sec:quotientconst} we recall the quotient description of real Bott manifolds. In Section~\ref{sec:Main_results} we state and prove our main theorem.  In Section \ref{sec:The_flux_group} we study the flux group of a symplectic real Bott manifold. 

Throughout this paper, all cohomology will be de Rham cohomology over $\R$.  

\section{Quotient description of real Bott manifolds}\label{sec:quotientconst}
In this section, we recall the quotient description of real Bott manifolds (see \cite{KM09} and \cite{Masuda} for details) and observe the cohomology ring of a real Bott manifold.  

Let $\mathfrak{B}(n)$ be the the set of $n \times n$ upper triangular $(0,1)$ matrices with zero diagonal entries. 
For a matrix $A \in \mathfrak{B}(n)$, $A^i_j$ denotes the $(i,j)$ entry of $A$ 
and $A^i$ (respectively, $A_j$) denotes the $i$-th row (respectively, $j$-th column) of $A$. Let $S^1$ be the unit circle in $\C$. For $z \in S^1$ and $a \in \Z /2 = \{ 0,1\}$, 
we set $z(a):=a$ if $a=0$ and $\bar{z}$ if $a=1$. 
We then define the involution $a_i$ on $T^n := (S^1)^n$ by 
\begin{equation}\label{eq:actionontori}
a_i(z_1,\dots ,z_n):= (z_1,\dots ,z_{i-1},-z_i,z_{i+1}(A_{i+1}^i),\dots ,z_n(A_n^i))
\end{equation}
for $i=1,\dots ,n$. Let $G(A)$ denote the transformation group on $T^n$ generated by $a_i$'s. 
Then the quotient space $M(A):= T^n/G(A)$ is known to be a real Bott manifold and every real Bott manifold can be obtained as $M(A)$ for some $A \in \mathfrak{B}(n)$. Although $A$ is not necessarily uniquely determined by a real Bott manifold, $A$ contains all geometrical information on $M(A)$. For example, 
\begin{equation} \label{eq:orientability}
M(A) \text{ is orientable } \Longleftrightarrow \sum _{j=1}^nA^i_j =0 \text{ in } \Z /2 \text{ for any } i
\end{equation}
(see \cite{KM09}). 

It is also helpful to describe $M(A)$ as the quotient of $\R^n$ by affine transformations.  In fact, let $\Gamma (A)$ denote the affine transformation group on $\R ^n$ generated by $s_i$'s defined by 
\begin{equation}\label{eq:actiononEuclidean}
	s_i(u_1,\dots ,u_n):= (u_1,\dots ,u_{i-1}, u_i+ \frac{1}{2},(-1)^{A^i_{i+1}}u_{i+1}, \dots ,(-1)^{A_n^i}u_n)
\end{equation}
for $i=1,\dots ,n$. Then, an exponential map from $\R$ to $S^1$ sending $u$ to $\exp(2\pi\sqrt{-1}u)$ induces a diffeomorphism from $\R ^n/\Gamma (A)$ onto $T^n/G(A)=M(A)$. 

Let $du_1,\dots ,du_n$ denote the standard $1$-forms on $\R^n$. 
Since each $du_j$ is invariant under parallel translations on $\R ^n$, 
it descends to a closed $1$-form on $T^n \cong \R ^n/\Z ^n$, which we also denote by $du_j$.  The (de Rham) cohomology ring $\deRham^*(T^n)$ of $T^n$ is the exterior algebra in $n$ variables $[du_1], \dots ,[du_n]$ over $\R$, 
where $[du_j]$ denotes the cohomology class represented by the $1$-form $du_j$. It follows from \eqref{eq:actionontori} or \eqref{eq:actiononEuclidean} that the endomorphism $a_i^*$ of $\deRham^*(T^n)$ induced by $a_i\in G(A)$ is given by 
\begin{equation} \label{eq:actionondeRham} 
		a_i^*([du_j]) = \begin{cases}
			[du_j] & \text{ if $A^i_j=0$},\\
			-[du_j] & \text{ if $A^i_j=1$}.
		\end{cases}
\end{equation}
We note that since $M(A)=T^n/G(A)$ and $G(A)$ is a finite group, we have 
\begin{equation} \label{eq:quotient}
\deRham^*(M(A))=\deRham^*(T^n)^{G(A)}
\end{equation}
(see \cite[Theorem 2.4 in p.120]{Bred72} for example), where the right hand side denotes the $G(A)$-invariants in $\deRham^*(T^n)$. 

\begin{Lemm}\label{Lemm:invariance}
Let $J$ be a subset of $\{ 1,\dots ,n\}$. Then $\prod _{j\in J}[du_j]\in \deRham^*(T^n)$ is $G(A)$-invariant if and only if $\sum _{j \in J}A_j=0$ in $\Z/2$. 
\end{Lemm}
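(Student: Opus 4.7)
The plan is to compute directly how each generator $a_i$ of $G(A)$ acts on the product $\prod_{j\in J}[du_j]$, using the formula \eqref{eq:actionondeRham}.

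First I would note that because $G(A)$ is generated by the involutions $a_1,\dots,a_n$, a cohomology class is $G(A)$-invariant if and only if it is fixed by $a_i^*$ for each $i\in\{1,\dots,n\}$. So it suffices to check the $a_i$'s one at a time.

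Next, I apply \eqref{eq:actionondeRham} to the product. Since $a_i^*$ is a ring homomorphism of $\deRham^*(T^n)$ and each factor $[du_j]$ is sent to $(-1)^{A^i_j}[du_j]$, we obtain
\begin{equation*}
a_i^*\Bigl(\prod_{j\in J}[du_j]\Bigr)=\Bigl(\prod_{j\in J}(-1)^{A^i_j}\Bigr)\prod_{j\in J}[du_j]=(-1)^{\sum_{j\in J}A^i_j}\prod_{j\in J}[du_j].
\end{equation*}
Because the product $\prod_{j\in J}[du_j]$ is a nonzero element of the exterior algebra $\deRham^*(T^n)$, this equals $\prod_{j\in J}[du_j]$ if and only if $\sum_{j\in J}A^i_j=0$ in $\Z/2$.

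Finally, I combine the two observations. The class $\prod_{j\in J}[du_j]$ is $G(A)$-invariant iff $\sum_{j\in J}A^i_j=0$ in $\Z/2$ for every $i=1,\dots,n$, and the latter condition is precisely the statement that the $i$-th component of the vector $\sum_{j\in J}A_j\in(\Z/2)^n$ vanishes for every $i$, i.e.\ that $\sum_{j\in J}A_j=0$ in $\Z/2$. There is no real obstacle here; the only point to keep track of is the distinction between rows $A^i$ and columns $A_j$ of the matrix $A$, which is exactly what makes the row-wise computation above reassemble into the column-sum condition in the statement.
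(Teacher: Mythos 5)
Your proposal is correct and follows essentially the same route as the paper: apply \eqref{eq:actionondeRham} factor by factor to see that $a_i^*$ multiplies $\prod_{j\in J}[du_j]$ by $(-1)^{\sum_{j\in J}A^i_j}$, then observe that invariance under all generators $a_i$ is exactly the condition $\sum_{j\in J}A_j=0$ in $\Z/2$. No issues.
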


\begin{proof}
	By \eqref{eq:actionondeRham}, we have 
	\begin{equation*}
		a_i^*(\prod _{j\in J}[du_j])=(-1)^{\sum _{j \in J}A^i_j}\prod _{j\in J}[du_j],
	\end{equation*}
	Thus, $\prod _{j\in J}[du_j]$ is fixed by $a_i^*$ if and only if $\sum _{j \in J}A^i_j =0 \text{ in } \Z /2$. This implies the lemma since $G(A)$ is generated by $a_i$'s. 
\end{proof}

\section{Main theorem}\label{sec:Main_results}

The following is our main theorem in this paper.

\begin{Theo}\label{Theo:maintheo}
Let $A \in \mathfrak{B}(2n)$. The following conditions are equivalent:
\begin{enumerate}
\item[(1)] $M(A)$ is cohomologically symplectic, that is, there exists an $\alpha \in \deRham ^2(M(A))$ such that $\alpha ^n$ is nonzero. 
\item[(2)] There exist $n$ subsets $\{j_1,j_{n+1}\}, \dots ,\{j_n,j_{2n}\}$ of $\{1,2,\dots,2n\}$ such that 
	\begin{itemize}
		\item $\coprod _k^n\{ j_k,j_{k+n}\} = \{1,2,\dots,2n\}$ and 
		\item $A_{j_1}=A_{j_{n+1}}, \dots, A_{j_n}=A_{j_{2n}}$.
	\end{itemize}
\item[(3)] There exists a symplectic form on $M(A)$. 
\item[(4)] There exists a K\"{a}hler structure on $M(A)$.
\end{enumerate}
Moreover, any $\alpha \in \deRham^2(M(A))$ in {\rm (1)} can be represented by a symplectic form on $M(A)$. 
\end{Theo}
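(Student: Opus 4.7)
The plan is to reduce everything to a Pfaffian calculation for constant-coefficient forms on $T^{2n}$. Since $(4)\Rightarrow(3)\Rightarrow(1)$ is immediate, the substantive work is in $(1)\Rightarrow(2)$ together with the ``moreover'' refinement, and in $(2)\Rightarrow(4)$. The preparatory step is to show that every class $\alpha \in \deRham^2(M(A))$ has a (unique) constant-coefficient representative
\[
\omega = \sum_{1 \le j < k \le 2n} c_{jk}\, du_j \wedge du_k
\]
on $T^{2n}$ with $c_{jk} = 0$ unless $A_j = A_k$ in $(\Z/2)^{2n}$. This follows from \eqref{eq:quotient} and Lemma~\ref{Lemm:invariance}, together with the observation that \eqref{eq:actionondeRham} is in fact an identity of forms, so the condition $A_j=A_k$ makes $du_j\wedge du_k$ genuinely $G(A)$-invariant on $T^{2n}$ (not merely cohomologous to an invariant), and hence $\omega$ descends to a closed $2$-form on $M(A)$.

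The central step, and the part I expect to be the main obstacle, is the Pfaffian identity
\[
\omega^n = n!\, \mathrm{Pf}(C)\, du_1 \wedge \dots \wedge du_{2n},
\]
where $C = (c_{jk})$ is the skew-symmetric $2n\times 2n$ matrix supported on pairs with $A_j=A_k$. Since pullback $\deRham^{2n}(M(A))\hookrightarrow \deRham^{2n}(T^{2n})$ is injective, $\alpha^n \neq 0$ is equivalent to $\mathrm{Pf}(C)\neq 0$. Now $\mathrm{Pf}(C)$ expands as a signed sum, over perfect matchings of $\{1,\dots,2n\}$ all of whose pairs satisfy $A_j=A_k$, of products of the entries $c_{jk}$; since distinct matchings yield distinct monomials, this polynomial has a nonvanishing specialization precisely when at least one such matching exists, which is exactly condition~(2). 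This gives $(1)\Leftrightarrow(2)$. Moreover, whenever $\alpha^n\neq 0$ the form $\omega^n$ is a nonzero constant multiple of $du_1\wedge\dots\wedge du_{2n}$ and therefore nowhere vanishing on $M(A)$, so $\omega$ itself is a symplectic form representing $\alpha$; this simultaneously establishes $(1)\Rightarrow(3)$, the ``moreover'' clause, and (via the special choice $\omega = \sum_{k=1}^{n} du_{j_k}\wedge du_{j_{k+n}}$) the implication $(2)\Rightarrow(3)$.

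For $(2)\Rightarrow(4)$ I would construct a compatible integrable complex structure by hand. Given the partition in~(2), define a constant almost complex structure $J$ on $\R^{2n}$ by $J(\partial_{u_{j_k}}) = \partial_{u_{j_{k+n}}}$ for $k=1,\dots,n$ (and $J^2=-\id$). By \eqref{eq:actiononEuclidean} the linear part of each generator $s_i\in\Gamma(A)$ is diagonal with entries $(-1)^{A_j^i}$, so the hypothesis $A_{j_k}=A_{j_{k+n}}$ forces $J$ to commute with every $s_i$. Hence $J$ descends to $M(A)$, and it is integrable because it has constant coefficients in Euclidean coordinates. Combining $J$ with the flat Euclidean metric (manifestly $\Gamma(A)$-invariant since all linear parts are orthogonal) and the symplectic form $\omega = \sum_{k=1}^{n} du_{j_k}\wedge du_{j_{k+n}}$ yields a K\"ahler triple, completing the circle of implications.
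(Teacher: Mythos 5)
Your proposal is correct and follows essentially the same route as the paper: reduce to the unique constant-coefficient $G(A)$-invariant representative $\sum_{A_j=A_k} c_{j,k}\, du_j\wedge du_k$, read off condition (2) from the nonvanishing of $\alpha^n$ (your Pfaffian expansion just makes explicit the matching argument the paper leaves implicit), and obtain both the symplectic representative and, from the pairing in (2), the flat K\"ahler structure. Your constant complex structure $J$ with $J(\partial_{u_{j_k}})=\partial_{u_{j_{k+n}}}$ is exactly the paper's identification $z_k=u_{j_k}+\sqrt{-1}\,u_{j_{k+n}}$ of $\R^{2n}$ with $\C^n$, so there is no substantive difference.
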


\begin{proof}
Because any closed symplectic manifold is cohomologically symplectic and any K\"{a}hler manifold is a symplectic manifold, it suffices to prove implications (1) $\Rightarrow$ (2) and (2) $\Rightarrow$ (4). 

Proof of (1) $\Rightarrow$ (2).  
Assume that there exists a de Rham cohomology class $\alpha \in \deRham ^2(M(A))$ such that $\alpha ^n \neq 0$. We identify $\deRham^*(M(A))$ with $\deRham^*(T^n)^{G(A)}$ by \eqref{eq:quotient}.  Then it follows from Corollary \ref{Lemm:invariance} that we can write $\alpha$ uniquely as 
\begin{equation}\label{eq:alpha}
\alpha = \sum _{j<k, A_j=A_k}c_{j,k}[du_j\wedge du_k]\quad
\text{with some $c_{j,k} \in \R$.}
\end{equation}
Thus $\alpha ^n \neq 0$ implies the condition (2). 

Proof of (2) $\Rightarrow$ (4). 
Assume that $A \in \mathfrak{B}(2n)$ satisfies the condition (2), namely 
$A_{j_k}=A_{j_{k+n}}$ for $k=1,\dots ,n$. Then we identify $\R ^{2n}$ with 
$\C ^n$ by 
\begin{equation*}
		z_k:=u_{j_k}+\sqrt{-1}u_{j_{k+n}}
\end{equation*}
for $k=1,\dots ,n$. Consider the standard Hermitian metric on $\C ^n$. Then, 
$\Gamma (A)$ acts on $\C ^n$ as biholomorphisms and isometries. In fact, 
through the above identification, it follows from (\ref{eq:actiononEuclidean}) 
that the action of $s_i \in \Gamma (A)$ on $\C^n$ is given by 
\begin{equation*}
		s_i(z_1,\dots ,z_n)_k= 
		\begin{cases}
	z_k+\frac{1}{2} & \text{ if $i=j_k$}, \\
	z_k+\frac{\sqrt{-1}}{2} & \text{ if $i=j_{k+n}$}, \\
	z_k & \text{ if $A^i_{j_k}=A^i_{j_{k+n}}=0$ and $i\neq j_k,j_{k+n}$}, \\
	-z_k & \text{ if $A^i_{j_k}=A^i_{j_{k+n}} =1$},
		\end{cases}
\end{equation*}
where the left hand side denotes the $k$-th component of 
$s_i(z_1,\dots ,z_n)$. Thus the quotient $M(A)=\C ^n/\Gamma (A)$ inherits the standard K\"{a}hler structure on $\C^n$.  

Finally, we shall prove the last statement in the theorem. As 
observed above, $\alpha\in \deRham^2(M(A))$ is of the form \eqref{eq:alpha}.  
We then define the differential closed $2$-form $\omega$ on $\R ^{2n}$ by 
\begin{equation} \label{eq:omegaMA}
\omega := \sum _{j<k, A_j=A_k}c_{j,k}du_j \wedge du_k.
\end{equation}
Comparing \eqref{eq:alpha} with \eqref{eq:omegaMA}, one sees that the 
condition $\alpha^n \neq 0$ implies that $\omega^n$ is nowhere zero. 
Thus $\omega$ is a symplectic form on 
$\R ^{2n}$. Since $\omega $ is invariant under the $\Gamma (A)$-action on 
$\R ^{2n}$, $\omega$ descends to a symplectic form on the quotient 
$M(A)=\R ^{2n}/\Gamma (A)$ and this represents the given class $\alpha$. 
\end{proof}

\begin{Example}
Let $A\in \mathfrak{B}(4)$. If $A$ is the zero matrix, then $M(A)$ is the 4-dimensional torus and symplectic.  Suppose that $A$ is non-zero and $M(A)$ is symplectic.  Then it follows from Theorem~\ref{Theo:maintheo} (2) that $A$ is one of the following: 
\[
{\tiny 
\begin{pmatrix}
0&1&1&0\\
0&0&0&0\\
0&0&0&0\\
0&0&0&0
\end{pmatrix},\quad 
\begin{pmatrix}
0&1&0&1\\
0&0&0&0\\
0&0&0&0\\
0&0&0&0
\end{pmatrix},\quad 
\begin{pmatrix}
0&0&1&1\\
0&0&0&0\\
0&0&0&0\\
0&0&0&0
\end{pmatrix},\quad
\begin{pmatrix}
0&0&1&1\\
0&0&1&1\\
0&0&0&0\\
0&0&0&0
\end{pmatrix},\quad 
\begin{pmatrix}
0&0&0&0\\
0&0&1&1\\
0&0&0&0\\
0&0&0&0
\end{pmatrix}.
}
\]
Real Bott manifolds $M(A)$ for $A$ above are diffeomorphic to each other but not diffeomorphic to the 4-dimensional torus (\cite{KM09}, \cite{Masuda}). One sees that $M(A)$ is the total space of a non-trivial $T^2$-bundle over $T^2$.  On the other hand, $T^2$-bundles over $T^2$ which are symplectic are classified in \cite{Geiges92}. One can easily check that our $M(A)$ is of type $\{ -I,I,(0,0)\}$ in \cite[Table 1]{Geiges92} . 

Finally we note that if 
\begin{equation*}
A= 
\tiny 
\begin{pmatrix}
0&1&1&0\\
0&0&1&1\\
0&0&0&0\\
0&0&0&0
\end{pmatrix},
\end{equation*}
then $M(A)$ is orientable by \eqref{eq:orientability}, but not symplectic. 
Therefore the class of symplectic real Bott manifolds is strictly smaller than that of orientable real Bott manifolds. 
\end{Example}

\section{The flux group}\label{sec:The_flux_group}
In this section, we will study the flux group of a symplectic real Bott 
manifold.  For that, we recall the definition of a flux group for a general symplectic manifold. 

Let $(M,\omega )$ be a closed symplectic manifold. 
A diffeomorphism $\phi : M \to M$ is called a \emph{symplectomorphism} if $\phi ^*\omega = \omega$ 
and the group of symplectomorphisms of $(M,\omega )$ is denoted by $\Symp (M,\omega )$. 
Associated to a smooth function $f : M \to \R$, the Hamiltonian vector field $X_f$ is defined by 
$i_{X_f}\omega =df$. For a one-parameter family $\{ f_t\} _{ 0\leq t \leq 1}$ of functions, 
we obtain a one parameter family $\{ X_{f_t}\} _{0 \leq t \leq 1}$ of Hamiltonian vector fields, 
and integrating $\{ X_{f_t}\}$, we obtain a one-parameter family $\{ \phi _t\} _{0 \leq t \leq 1}$ of 
diffeomorphisms defined by 
\begin{equation*}
	\frac{d}{dt}\phi _t=X_{f_t}\circ \phi _t\ \text{ and }\ \phi _0 = \id.
\end{equation*}
The time-one map $\phi _1$ is a symplectomorphism and called a \emph{Hamiltonian diffeomorphism}. 
It is known that all Hamiltonian diffeomorphisms of $(M,\omega )$ form a subgroup, denoted $\Ham (M,\omega )$,
of the identity component $\Symp _0(M,\omega )$ of $\Symp (M,\omega )$. For a symplectic isotopy $\{ \phi _t\}$,
that is, an isotopy through symplectomorphisms, 
we obtain a one-parameter family $\{ X_t\}$ of vector fields define by 
\begin{equation*} 
	\frac{d}{dt}\phi _t = X_t \circ \phi _t.
\end{equation*}
The \emph{flux} of $\{ \phi _t\}$ is then defined to be 
\begin{equation}\label{eq:flux}
	\int _0^1[i_{X_t}\omega ]dt \in \deRham ^1(M).
\end{equation}
It is known that the flux depends only on the homotopy class of symplectic isotopies 
with fixed end points $\phi _0 = \id $ and $\phi _1$, so that it defines a homomorphism 
\begin{equation*}
	\Flux : \Symp _0(M,\omega ) \to \deRham ^1(M)/\Gamma _\omega ,
\end{equation*}
where $\Gamma _\omega$ is the image of the fundamental group $\pi _1(\Symp _0(M,\omega ))$ by the 
flux and called the \emph{flux group} of $(M,\omega )$. 
The solution of the flux conjecture (\cite{Ono06}) says that the subgroup $\Gamma _\omega$ of
$\deRham ^1(M)$ is closed and discrete. 
According to \cite{Banyaga78}, the kernel of $\Flux$ is exactly equal to $\Ham (M,\omega )$, 
in other words, we have an exact sequence
\begin{equation*}
	\{ 1\} \to \Ham (M,\omega ) \to \Symp _0(M,\omega ) \overset{\Flux}{\to} \deRham ^1(M)/\Gamma _\omega . 
\end{equation*}

Now, we consider the flux of a symplectic real Bott manifold. 

\begin{Theo}
Let $M(A)$ be a real Bott manifold with a symplectic form $\omega$ given by 
\eqref{eq:omegaMA}. Then, the flux group $\Gamma _\omega $ is a lattice group 
of $\deRham ^1(M(A))$ of full rank. 
\end{Theo}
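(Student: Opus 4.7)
The plan is to realize an explicit full-rank lattice inside $\Gamma_\omega$ using commuting circle actions on $M(A)$ coming from translation in the $\Gamma(A)$-invariant coordinate directions, and then invoke discreteness of $\Gamma_\omega$ from the solution of the flux conjecture \cite{Ono06} to conclude. By applying Lemma~\ref{Lemm:invariance} with $|J|=1$, $\deRham^1(M(A))$ is spanned by $\{[du_j] : A_j = 0\}$; write $I := \{j \in \{1,\dots,2n\} : A_j = 0\}$, so that $\dim \deRham^1(M(A)) = |I|$.

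For each $j \in I$, the vector field $\partial/\partial u_j$ on $\R^{2n}$ is $\Gamma(A)$-invariant by \eqref{eq:actiononEuclidean} (all signs $(-1)^{A^i_j}$ by which the $s_i$ act on the $u_j$-coordinate are $+1$, precisely because $A_j = 0$), so it descends to a vector field $X_j$ on $M(A)$. Its flow $\phi^{(j)}_t \colon u \mapsto u + te_j$ preserves the constant-coefficient 2-form $\omega$ from \eqref{eq:omegaMA}, and $\phi^{(j)}_1$ is translation by $e_j \in \Z^{2n} \subset \Gamma(A)$, which acts trivially on $M(A)$. Hence $\{\phi^{(j)}_t\}_{t\in[0,1]}$ is a loop in $\Symp_0(M(A),\omega)$ based at $\id$, and since $X_j$ and $\omega$ do not depend on $t$, its flux is
\begin{equation*}
	\Flux(\phi^{(j)}) \,=\, [i_{X_j}\omega] \,=\, \sum_{k\in I\setminus\{j\}} \tilde c_{j,k}\,[du_k],
\end{equation*}
where $\tilde c_{j,k}$ is the antisymmetric extension of the coefficients in \eqref{eq:omegaMA} ($\tilde c_{j,k}=c_{j,k}$ if $j<k$ and $\tilde c_{j,k}=-c_{k,j}$ if $j>k$).

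It remains to show that the $|I|\times|I|$ antisymmetric matrix $\tilde C := (\tilde c_{j,k})_{j,k\in I}$ is non-singular; granting this, the fluxes $\{\Flux(\phi^{(j)})\}_{j\in I}$ form an $\R$-basis of $\deRham^1(M(A))$, their $\Z$-span is a full-rank lattice in $\Gamma_\omega$, and the discreteness of $\Gamma_\omega$ then forces $\Gamma_\omega$ itself to be a full-rank lattice. I expect non-singularity of $\tilde C$ to be the main technical point. The strategy is to decompose $\omega$ along the partition of $\{1,\dots,2n\}$ by the value of the column $A_j$: setting $I_v := \{j : A_j = v\}$ for $v \in (\Z/2)^n$, one has $\omega = \sum_v \omega_v$ where $\omega_v$ is supported on $I_v$, and since the $I_v$ are pairwise disjoint, a multinomial expansion of $\omega^n$ together with $\omega^n \neq 0$ forces each $\omega_v$ to be non-degenerate on its corresponding index subspace. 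In particular $\omega_0 = \omega|_I$ is a non-degenerate symplectic form on $\mathrm{span}\{du_j : j \in I\}$, whose matrix in the basis $\{du_j\}_{j\in I}$ is precisely $\tilde C$.
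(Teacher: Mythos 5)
Your proposal is correct and follows essentially the same route as the paper: the fluxes of the loops in $\Symp_0(M(A),\omega)$ coming from translations in the coordinates $u_j$ with $A_j=0$ span $\deRham^1(M(A))$, and Ono's solution of the flux conjecture then upgrades this spanning set to the conclusion that $\Gamma_\omega$ is a full-rank lattice. The only real difference is where the linear algebra is done: the paper first normalizes $\omega$ to the form \eqref{eq:omega} by a linear change of the first $2r$ coordinates (which implicitly uses nondegeneracy of $\omega$ restricted to the span of $\{du_j : A_j=0\}$), whereas you stay in the original coordinates and justify that block nondegeneracy explicitly via the disjoint-support multinomial expansion of $\omega^n$ --- a detail the paper glosses over, so your version is, if anything, slightly more complete.
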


\begin{proof}
It follows from Lemma~\ref{Lemm:invariance} that $\deRham^1(M(A))$ is generated by $[du_j]$ with $A_j =0$, and since $M(A)$ is symplectic, the number of zero columns in $A$ is even by Theorem \ref{Theo:maintheo}, so that $\deRham^1(M(A))$ is even dimensional. Let $2r$ be the dimension of $\deRham ^1(M(A))$. We may assume that $\deRham^1(M(A))$ is generated by $du_1,\dots ,du_{2r}$ by changing the suffices of the coordinates. Moreover, through a linear coordinate change of the first $2r$ coordinates $u_1,\dots ,u_{2r}$, we may assume that the symplectic form $\omega $ on $M(A)$ is of the form 
\begin{equation}\label{eq:omega}
\omega = \sum _{i=1}^rdu_i\wedge du_{i+r} + \sum _{j<k,A_j=A_k\neq 0}c_{j,k}du_j\wedge du_k.
\end{equation}
	
Since $M(A) = T^{2n}/G(A)$ and $A_p=0$ for $p=1,\dots ,2r$, the multiplication of $S^1$ on the $p$-th coordinate on $T^{2n}$ for $1 \leq p \leq 2r$ descends to an $S^1$-action on $M(A)$ and defines a symplectic isotopy $\{ \phi _t^p\}$. The one-parameter family $\{ X_t^p\}$ of vector fields associated with $\{ \phi _t^p\}$ is then $\partial /\partial u_p$ (possibly up to a non-zero constant), so that it 	follows from (\ref{eq:flux}) and (\ref{eq:omega}) that 
\begin{equation*}
\text{the flux of } \{ \phi ^p_t\} = \int ^1_0[i_{X_t^p}\omega ]dt = \int _0^1[du_q]dt = [du_q]
\end{equation*}
where $q= p+r$ if $1 \leq p \leq r$ and $q= p-r $ if $r+1 \leq p \leq 2r$. 
This shows that $\Gamma _\omega $ spans $\deRham ^1(M(A))$ over $\R$. Since $\Gamma _\omega$ is closed and discrete in $\deRham^1(M(A))$ as remarked before, it must be a lattice group of $\deRham^1(M(A))$ of full rank. 
\end{proof}

\bigskip
\noindent
{\bf Acknowledgment}. The author would like to thank Professor Mikiya Masuda for stimulating discussion about
toric topology and symplectic topology.  
He also would like to thank Professor Kaoru Ono for explaining the flux group of a symplectic manifold, Professor Takahiko Yoshida for informing on the paper \cite{Geiges92}, and Shintaro Kuroki and Yunhyung Cho for useful comments on the paper.

\end{document}